\newtheorem{thm}{Theorem}
\newtheorem{lem}[thm]{Lemma}
\newtheorem{prop}[thm]{Proposition}
\theoremstyle{definition}
\numberwithin{equation}{section}
\def\Z{\mathbb Z}
\begin{document}

\title[The pure cactus group is residually nilpotent]{The pure cactus group is residually nilpotent}

\author[J. Mostovoy]{Jacob Mostovoy}

\address{Departamento de Matem\'aticas, CINVESTAV-IPN\\ Col. San Pedro Zacatenco, M\'exico, D.F., C.P.\ 07360\\ Mexico}
\email{jacob@math.cinvestav.mx}

\begin{abstract}
We show that the pure cactus group $\Gamma_{n+1}$ is residually nilpotent and exhibit a surjective homomorphism $\Gamma_{n+1}\to(\Z/2\Z)^{2^n-n(n+1)/2-1}$ whose kernel is residually torsion-free nilpotent. 
\end{abstract}

%\subjclass[2010]{Primary }

%\keywords{}

\maketitle

\section{Introduction: the cactus groups.}
For an integer $n>0$, the \emph{cactus group} $J_n$ has the generators $s_{p,q}$, where $1\leq p< q\leq n$ and the following relations:
\[
\begin{array}{rcll}
s_{p,q}^2&=& 1,&\\
s_{p,q}s_{m,r}&=& s_{m,r}s_{p,q} &\quad \text{if\ } [p,q]\cap [m,r] =\emptyset,\\
s_{p,q}s_{m,r}&=& s_{p+q-r, p+q-m}s_{p,q} &\quad \text{if\ }  [m,r] \subset [p,q].
\end{array}
\]
The group $J_1$ is trivial and $J_2=\Z/2\Z$. Elements of $J_n$ can be drawn as ``planar braids'' with self-intersections in the following manner (the braid below is $s_{3,7}$):

$$\includegraphics[width=80pt]{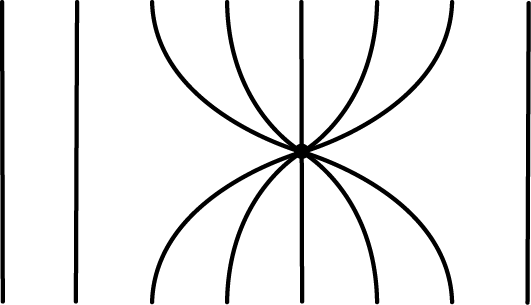}$$

\noindent The product in $J_n$ is simply the concatenation of braids. The relations have the following form:
$$\includegraphics[width=350pt]{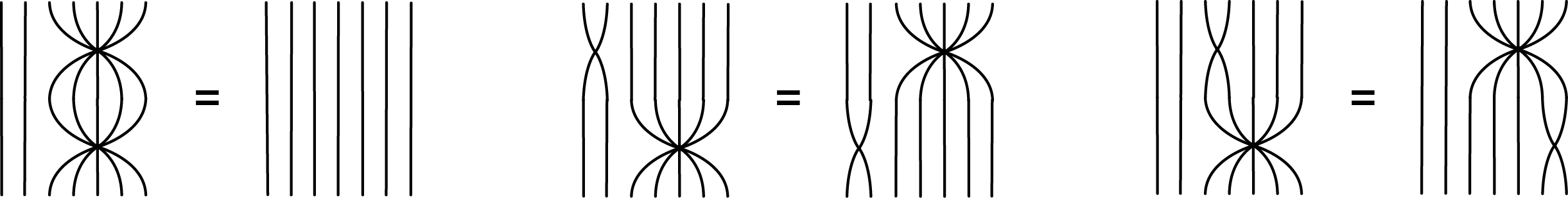}$$

There is a homomorphism $J_n\to S_n$ to the symmetric group: the permutation defined by a braid is obtained by following its strands. The kernel of this homomorphism is the \emph{pure cactus group} $\Gamma_{n+1}$. It is the fundamental group of the real locus of the Deligne-Mumford compactification $\overline{\mathcal{M}}_{0,n+1}$ of the moduli space of rational curves with $n+1$ marked points\footnote{The terminology in the literature varies, see \cite{DJS, De, EHKR, HK}; we use that of \cite{EHKR}.}. 

\medskip

It has been conjectured \cite{EHKR} that $\Gamma_{n+1}$ is residually nilpotent for all $n$. In this note we prove this fact  and show that there is a surjective homomorphism $$\Gamma_{n+1}\to(\Z/2\Z)^{2^n-n(n+1)/2-1}$$  whose kernel is residually torsion-free nilpotent, that is, admits a filtration whose successive quotients are torsion-free, and whose terms intersect in the trivial subgroup.

\section{The proof}

\subsection{The diagram groups.}
The \emph{diagram group} $D_n$ is generated by the $\tau_I$, where $I$ varies over the non-empty subsets of $\{1,\ldots,n\}$, with the relations
\[
\begin{array}{rcll}
\tau_I^2&=& 1,&\\
\tau_I \tau_J &=& \tau_J \tau_I &\quad \text{if\ } I\subset J \text{\ or\ } I\cap J=\emptyset.
\end{array}
\]
A \emph{diagram} is a word in the $\tau_I$; diagrams can be drawn as pictures with $n$ vertical strands and a number of horizontal chords, each at a different height, joining several strands.  For instance, the following diagram has a single chord that joins the strands $1,2$ and $4$:
$$\includegraphics[width=50pt]{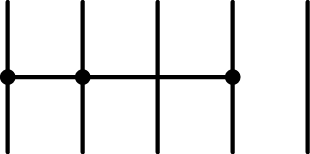}$$

\noindent There is a homomorphism $\Gamma_{n+1}\to D_n$ defined as follows. Write an element of $\Gamma_{n+1}$ as a planar braid on $n$ strands, whose self-intersection points are all on different levels. Consider the vertical lines of a diagram as parametrizing the strands of the braid by height; then, a braid is sent to the diagram whose horizontal chords  join the preimages of the intersection points:

$$\includegraphics[width=180pt]{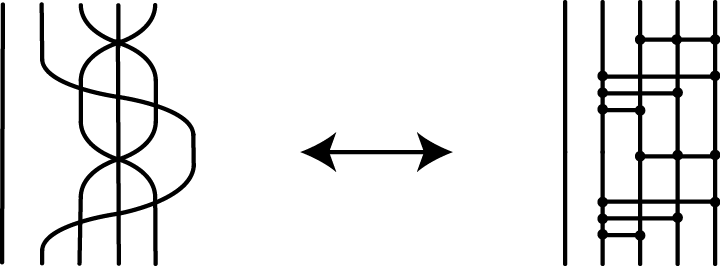}$$

\noindent It is clear that this construction sends the relations in $\Gamma_{n+1}$ to the relations in $D_n$. Moreover, if a word $w$ in the $\tau_I$ is the image of an element in $\Gamma_{n+1}$, the word obtained from $w$ by applying one of the relations of $D_n$  is in the image of the same element. Therefore, 
the pure cactus group $\Gamma_{n+1}$ is a subgroup of $D_n$.

\subsection{The diagram algebra $A_n$}
Let $A_n$ be the unital graded algebra over $\mathbb{F}_2$ generated by the elements $t_I$ of degree one, where $I$ varies over the non-empty subsets of $\{1,\ldots,n\}$, with the relations
\[
\begin{array}{rcll}
t_I^2&=& 0,&\\
t_I t_J &=& t_J t_I &\ \text{if\ } I\subset J \text{\ or\ } I\cap J=\emptyset.
\end{array}
\]
The elements of the form $1+t_I$ are invertible in $A_n$; each is its own inverse. 
\begin{lem}
The subgroup of generated by the $1+t_I$ in the group $A^*_n$ of invertible elements in $A_n$  is isomorphic to the diagram group $D_n$.
\end{lem}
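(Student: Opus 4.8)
The plan is to exhibit mutually inverse homomorphisms between $D_n$ and the subgroup $G \subset A_n^*$ generated by the elements $1+t_I$. The map $D_n \to G$ is the easy direction: send $\tau_I \mapsto 1+t_I$. Since $(1+t_I)^2 = 1 + 2t_I + t_I^2 = 1$ in characteristic two (using $t_I^2=0$), and since $t_I t_J = t_J t_I$ whenever $I\subset J$ or $I\cap J=\emptyset$ forces $(1+t_I)(1+t_J)=(1+t_J)(1+t_I)$, all the defining relations of $D_n$ are satisfied, so this assignment extends to a well-defined surjective homomorphism $\varphi\colon D_n \to G$. The whole content of the lemma is that $\varphi$ is injective, equivalently that no nontrivial word in the $\tau_I$ maps to $1$ in $A_n$.

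To prove injectivity I would construct a left inverse, i.e.\ a homomorphism $\psi\colon G \to D_n$ with $\psi\circ\varphi = \mathrm{id}_{D_n}$. The natural candidate is to read off, from an invertible element $g = (1+t_{I_1})(1+t_{I_2})\cdots(1+t_{I_k})$ written as a product of generators, the corresponding diagram $\tau_{I_1}\tau_{I_2}\cdots\tau_{I_k} \in D_n$, and show this is independent of the chosen expression. Equivalently: expand $g$ in the natural $\mathbb{F}_2$-basis of $A_n$ given by products $t_{I_1}\cdots t_{I_r}$ with the indices forming a "valid" ordered sequence (a normal form for monomials modulo the algebra relations), and recover the diagram from the top-degree part, or more robustly from the full expansion. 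The key structural input is that $A_n$ as a vector space has a basis indexed by the same combinatorial data (reduced diagrams) that gives a normal form for $D_n$; one then checks that $\varphi$ carries the normal-form basis of $\mathbb{F}_2[D_n]$-style data bijectively onto this basis, so that distinct reduced diagrams have distinct images.

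Concretely I would proceed as follows. First, establish a normal form for elements of $D_n$: using $\tau_I^2=1$ and the commutations, every element is represented by a word $\tau_{I_1}\cdots\tau_{I_r}$ that is "reduced" in a suitable sense (no adjacent equal or commuting-and-reorderable cancellations), and two reduced words represent the same element only if they are related by commutations alone. Second, give the parallel monomial normal form in $A_n$: the nonzero monomials $t_{I_1}\cdots t_{I_r}$ modulo the relations are indexed by the same reduced data, and these form an $\mathbb{F}_2$-basis of $A_n$ (this requires a diamond-lemma / rewriting argument on the algebra presentation). Third, compute $\varphi(\tau_{I_1}\cdots\tau_{I_r}) = \prod (1+t_{I_j})$ and expand: I expect the resulting element of $A_n$ to have a well-defined "leading term" equal to the normal-form monomial of the reduced word, plus lower-complexity terms, so that $\varphi$ is injective by a triangularity argument. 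Then $\psi$ sending each basis element of $A_n$ lying in $G$ back to its diagram is the desired inverse.

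The main obstacle is the third step: controlling the expansion of $\prod(1+t_{I_j})$ well enough to see injectivity. Over $\mathbb{F}_2$ a product of $k$ generators expands into $2^k$ monomials, many of which vanish (whenever a repeated or order-forced factor appears) and many of which collide, so one needs a genuine invariant — either a carefully chosen grading/filtration on $A_n$ making $\varphi$ triangular, or a direct combinatorial identification of the coefficient, in the normal-form expansion of $\varphi(w)$, of the monomial associated to the reduced form of $w$, showing it equals $1$. Establishing that the reduced-diagram monomials really are linearly independent in $A_n$ (the diamond lemma step) is the other technical point, but it is routine; the interaction between the algebra expansion and the diagram combinatorics is where the real care is needed.
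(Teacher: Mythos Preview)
Your approach is essentially the paper's: establish a normal form (the paper calls these \emph{lean} words) for $D_n$, the parallel normal form for monomials in $A_n$, and then observe that $\varphi$ of a lean word has the corresponding lean monomial as its top-degree part, hence is nontrivial. The ``main obstacle'' you flag is not one: since $A_n$ is \emph{graded} with each $t_I$ in degree one, the degree-$k$ component of $\prod_{j=1}^{k}(1+t_{I_j})$ is simply the single monomial $t_{I_1}\cdots t_{I_k}$ with coefficient $1$---all other terms in the expansion have strictly lower degree, so no collisions can touch the top. That monomial is nonzero precisely because the word is lean, which you already arranged in step two; hence injectivity follows immediately by triangularity, and there is no need to construct an explicit inverse $\psi$.
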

\begin{proof}
Call a word in the generators $\tau_I$ \emph{lean} if, by applying solely the commutation relations 
$\tau_I\tau_J=\tau_J\tau_I$ it cannot be taken into the form which contains some generator repeated twice consecutively. It can be shown that any two lean words which represent the same diagram in $D_n$ can be transformed into each other by means of the commutation relations only. 

Each element of $D_n$ can be represented by a lean word and a non-trivial lean word represents a non-trivial diagram. Quite similarly, one can speak about lean monomials (that is, products of generators) in $A_n$; lean words in the $\tau_I$ are in one-to-one correspondence with lean monomials in the $t_I$ and each non-trivial lean monomial is non-zero in $A_n$.

Consider the homomorphism $D_n\to A^*_n$ which sends $\tau_I$ to $1+t_I$. The image of a lean word in the generators $\tau_I$ is equal, up to terms of lower degree, to the corresponding lean monomial in the $t_I$ and, hence, is non-trivial.
\end{proof}

The group $A^*_n$ of invertible elements in $A_n$ is residually nilpotent. Indeed, let $(A^*_n)_k$ be the subgroup of $A^*_n$ consisting of the elements of the form $$1+\text{terms\ of\ degree\ at\ least\ }k.$$ The inverse of an element whose homogeneous term of the lowest non-zero degree equals $u$ also is of the form $1+u+\text{terms\ of\ higher\ degree}$. As a consequence, the commutator of 
$(A^*_n)_k$ and $A^*_n$ lies in $(A^*_n)_{k+1}$; the intersection of all the $(A^*_n)_k$ is trivial. Since $\Gamma_{n+1}\subset D_n\subseteq A^*_n$, we get
\begin{thm}
The pure cactus group $\Gamma_{n+1}$ is residually nilpotent.
\end{thm}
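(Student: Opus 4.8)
The plan is to leverage the two embeddings already in hand, $\Gamma_{n+1}\hookrightarrow D_n$ (from the picture construction) and $D_n\hookrightarrow A_n^*$ (from the Lemma), and to prove directly that $A_n^*$ is residually nilpotent. Since residual nilpotence passes to subgroups — if $H\leq G$ then $\gamma_k(H)\subseteq\gamma_k(G)$ for every $k$, so a trivial intersection of the lower central series of $G$ forces one for $H$ — this immediately gives the theorem for $\Gamma_{n+1}$.

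For the residual nilpotence of $A_n^*$, I would introduce, for each $k\geq 1$, the subset $(A_n^*)_k$ of invertible elements of the form $1+(\text{homogeneous terms of degree}\geq k)$, so that $(A_n^*)_1=A_n^*$. The first task is to verify that each $(A_n^*)_k$ is a normal subgroup. Closure under multiplication is clear; for inverses, if the lowest nonzero homogeneous component of $a-1$ has degree $\geq k$, then $a^{-1}=1+\sum_{j\geq 1}(1-a)^j$ — a well-defined sum because the degrees of the summands grow — again has this property. Normality is automatic since conjugation preserves degree.

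The key step is the estimate $[(A_n^*)_k,\,A_n^*]\subseteq (A_n^*)_{k+1}$. Writing $a=1+u$ with $u$ of degree $\geq k$ and $b=1+v$ with $v$ of degree $\geq 1$, one expands the commutator $aba^{-1}b^{-1}$; the constant term and all terms of degree $\leq k$ cancel, leaving $1+(uv-vu)+(\text{terms of degree}\geq k+1)$, and $uv-vu$ itself has degree $\geq k+1$. An easy induction then yields $\gamma_k(A_n^*)\subseteq (A_n^*)_k$ for the lower central series. Because $A_n$ is graded, an element that is congruent to $1$ modulo terms of arbitrarily high degree must equal $1$, so $\bigcap_k (A_n^*)_k=\{1\}$ and hence $\bigcap_k \gamma_k(A_n^*)=\{1\}$. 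This is exactly residual nilpotence: given $a\neq 1$, some $k$ has $a\notin (A_n^*)_k\supseteq\gamma_k(A_n^*)$, and $A_n^*/\gamma_k(A_n^*)$ is a nilpotent quotient detecting $a$. Intersecting with $\Gamma_{n+1}\subseteq A_n^*$ finishes the proof.

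I expect that essentially all the genuine difficulty lies in what is already done — the embedding $\Gamma_{n+1}\subset D_n$ and, above all, the Lemma, which trades the group $D_n$ with its delicate normal form for the algebra $A_n$, where lean monomials are patently linearly independent. What remains is the standard Magnus-type observation that the multiplicative group of a graded augmented $\mathbb{F}_2$-algebra is residually nilpotent, the only point requiring any care being the commutator degree estimate above.
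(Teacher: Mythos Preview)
Your proposal is correct and follows essentially the same route as the paper: embed $\Gamma_{n+1}$ into $A_n^*$ via $D_n$, and use the degree filtration $(A_n^*)_k$ to exhibit residual nilpotence of $A_n^*$, which then passes to subgroups. The only quibble is that your geometric-series formula for $a^{-1}$ tacitly assumes a completion of the graded algebra $A_n$; the conclusion that inverses remain in $(A_n^*)_k$ is nonetheless correct and is recovered by comparing homogeneous components of $aa^{-1}=1$ directly, which is closer to how the paper phrases it.
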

\subsection{The even diagram subgroups}
For each non-empty $I\subseteq \{1,\ldots,n\}$ define the homomorphism
$$\delta_I: D_n\to \Z/2\Z$$
by $\delta_I(\tau_I)=1$ and $\delta_I(\tau_J)=0$ if $J\neq I$. 
Call the intersection of the kernels of all the $\delta_I$ the \emph{even diagram subgroup} of $D_n$ and denote it by $D_n^{\circ}$. 
Define $$\Gamma_{n+1}^{\circ}=\Gamma_{n+1}\cap D_n^{\circ}.$$
\begin{prop}
The subgroup $\Gamma_{n+1}^{\circ}$ is the kernel of the surjective homomorphism
$$\prod_{|I|>2}\delta_I: \Gamma_{n+1}\to (\Z/2\Z)^{2^n-n(n+1)/2-1}.$$
\end{prop}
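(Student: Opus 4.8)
The plan is to establish two things separately: that the displayed homomorphism is surjective, and that its kernel is $\Gamma_{n+1}^{\circ}$. For the kernel, recall that $\Gamma_{n+1}^{\circ}=\Gamma_{n+1}\cap D_n^{\circ}$ is exactly the set of $w\in\Gamma_{n+1}$ killed by \emph{all} of the $\delta_I$, so the reverse inclusion $\Gamma_{n+1}^{\circ}\subseteq\ker\bigl(\prod_{|I|>2}\delta_I\bigr)$ is trivial and the content is the forward inclusion; I would prove it by showing that, on $\Gamma_{n+1}$, the maps $\delta_I$ with $|I|\le 2$ are forced to vanish once all $\delta_J$ with $|J|>2$ do. The case $|I|=1$ is immediate: each generator $s_{p,q}$ of $J_n$ reverses a block of at least two strands, so the image of $\Gamma_{n+1}$ in $D_n$ lies in the subgroup generated by the $\tau_J$ with $|J|\ge 2$, whence $\delta_{\{i\}}$ vanishes identically on $\Gamma_{n+1}$.

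For $|I|=2$, say $I=\{a,b\}$, I would introduce $\epsilon_{ab}=\sum_{J\supseteq\{a,b\}}\delta_J\colon D_n\to\Z/2\Z$, which is a homomorphism since each $\delta_J$ is, and argue that $\epsilon_{ab}$ vanishes on $\Gamma_{n+1}$. Reading an element of $\Gamma_{n+1}$ as a planar braid, a chord $\tau_J$ with $a,b\in J$ records a block reversal in which strands $a$ and $b$ cross exactly once, while a chord $\tau_J$ with at most one of $a,b$ in $J$ records no crossing between them; hence $\epsilon_{ab}$ counts, modulo $2$, the number of crossings between strands $a$ and $b$, and this equals whether the underlying permutation reverses their relative order, which is trivial on $\Gamma_{n+1}$. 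Now $\delta_{\{a,b\}}=\epsilon_{ab}+\sum_{J\supsetneq\{a,b\}}\delta_J$, and every $J\supsetneq\{a,b\}$ satisfies $|J|>2$; so if $w\in\Gamma_{n+1}$ is killed by all $\delta_J$ with $|J|>2$, then it is killed by $\delta_{\{a,b\}}$ and by every $\delta_{\{i\}}$, hence $w\in\Gamma_{n+1}^{\circ}$. This gives the kernel statement.

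For surjectivity I would construct, for each $I\subseteq\{1,\ldots,n\}$ with $|I|=k\ge 3$, an element $w_I\in\Gamma_{n+1}$ with $\delta_I(w_I)=1$ and $\delta_J(w_I)=0$ for every $J\ne I$ with $|J|\ge 3$ (the values $\delta_J(w_I)$ with $|J|\le 2$ being irrelevant for the target). Using only adjacent transpositions $s_{m,m+1}$ — each contributing a chord of size $2$ — pick a word $u$ that brings the strands of $I$ into $k$ consecutive positions $a,a+1,\ldots,a+k-1$; then $s_{a,a+k-1}$ creates exactly one chord joining precisely the strands of $I$, namely $\tau_I$. Append a further word $v$ in adjacent transpositions whose permutation is chosen so that $w_I=u\,s_{a,a+k-1}\,v$ has trivial total permutation, hence lies in $\Gamma_{n+1}$. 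The only block reversal of size $\ge 3$ appearing in $w_I$ is the single $s_{a,a+k-1}$, so $\delta_I(w_I)=1$ and all other large chords cancel; thus $\prod_{|I|>2}\delta_I$ hits every standard basis vector of $(\Z/2\Z)^{2^n-n(n+1)/2-1}$ (a count which matches the number of subsets of size greater than $2$), and is surjective.

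The routine points I am glossing over are that $\epsilon_{ab}$ is a well-defined homomorphism (clear from $\delta_J$ being one) and the elementary fact that a pure braid crosses any fixed pair of strands an even number of times. The step I expect to demand the most care is the surjectivity construction — specifically, verifying that the gather--reverse--ungather word $w_I$ contains exactly one block reversal of size $\ge 3$, so that all large chords other than $\tau_I$ genuinely disappear, and that the adjacent-transposition words $u$ and $v$ can indeed be chosen as claimed.
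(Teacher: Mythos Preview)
Your proof is correct and follows essentially the same route as the paper. For surjectivity, both you and the paper build the test element for $I$ by conjugating a single block reversal (the paper uses positions $1,\ldots,k$, you use arbitrary consecutive positions) by words in the adjacent generators $s_{m,m+1}$, so that the resulting diagram has exactly one chord of size greater than two, namely $\tau_I$; for the kernel, your homomorphism $\epsilon_{ab}=\sum_{J\supseteq\{a,b\}}\delta_J$ is just an algebraic encoding of the paper's observation that any two strands of a pure braid cross an even number of times, and your identity $\delta_{\{a,b\}}=\epsilon_{ab}+\sum_{J\supsetneq\{a,b\}}\delta_J$ is exactly the decomposition into crossings at two-strand versus larger intersection points that the paper uses. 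The one point you make explicit that the paper leaves implicit is the vanishing of $\delta_{\{i\}}$ on $\Gamma_{n+1}$, which indeed follows since every $s_{p,q}$ has $q>p$.
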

\begin{proof}
Consider a subset $I$ with $k>2$ elements. Let $\sigma\in S_n$ be a permutation which maps the elements of $I$ to the set $\{1,\ldots, k\}$. Write $\lambda$ for the permutation which writes $1,2, \ldots, k$ backwards. The subgroup of $J_n$ generated by the elements of the form $s_{i,i+1}$ maps surjectively onto the symmetric group $S_n$ since the latter is generated by transpositions. Choose in this subgroup an element $\overline{\sigma}$ which maps to  $\sigma$ and $\overline{\sigma^{-1}\lambda}$ which maps to $\sigma^{-1}\lambda$. Then, $$s:=\overline{\sigma^{-1}\lambda}\cdot s_{1,k}\cdot \overline{\sigma}\,\in\,\Gamma_{n+1}$$ and the diagram of $s$ is a product of a number of chords, connecting two strands each, and precisely one chord that connects the strands indexed by the subset $I$. 
In particular, $\delta_I(s) = 1$ and $\delta_J(s) = 0$ if $J\neq I$ and $|J|>2$. This shows that the product of all the $\delta_I$ with $I$ of cardinality greater than two is surjective.
There are exactly $2^n-n(n+1)/2-1$ subsets $I$ of this kind.

Now, if $b\in\Gamma_{n+1}$ is thought of as a braid, any pair of its strands intersect an even number of times. If $b$ is in 
the kernel of $\prod_{|I|>2}\delta_I$, each pair of strands of $b$ intersect at an even number of points where more than two strands meet. As a consequence, each $\tau_{I}$ with $|I|=2$ appears in the diagram of $b$ an even number of times and, therefore, $\delta_I(b)=0$. This shows that $\Gamma_{n+1}^{\circ}$ coincides with the kernel of $\prod_{|I|>2}\delta_I$. 
\end{proof}

\begin{thm}
The group $\Gamma_{n+1}^{\circ}$ is residually torsion-free nilpotent.
\end{thm}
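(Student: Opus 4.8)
The plan is to rerun the argument that $\Gamma_{n+1}$ is residually nilpotent with the diagram algebra taken over $\mathbb{Q}$ in place of $\mathbb{F}_2$. The obstruction is visible at once: if in a graded $\mathbb{Q}$-algebra $t_I$ has degree one and $t_I^2=0$, then $1+t_I$ does \emph{not} have order two, so $\tau_I\mapsto 1+t_I$ is not a homomorphism on $D_n$. I would circumvent this by recording the ``parity'' of each generator in a separate copy of $\Z/2\Z$, set up so that the even subgroup $D_n^{\circ}$ is automatically thrown into the torsion-free part of the target.

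Write $V$ for the set of non-empty subsets of $\{1,\dots,n\}$. Let $A_n^{\mathbb{Q}}$ be the graded $\mathbb{Q}$-algebra with the same presentation as $A_n$ (so, as in the Lemma, its lean monomials form a basis), and let $\mathfrak{m}$ be its augmentation ideal. For each $I\in V$, sending $t_I$ to $-t_I$ and fixing all other $t_J$ respects the defining relations and so extends to an algebra automorphism $\sigma_I$ of $A_n^{\mathbb{Q}}$; the $\sigma_I$ pairwise commute, are involutions, and satisfy $\sigma_I(1+t_I)=1-t_I=(1+t_I)^{-1}$. Now form the semidirect product $G=(A_n^{\mathbb{Q}})^{*}\rtimes(\Z/2\Z)^V$, the $I$-th factor of $(\Z/2\Z)^V$ acting through $\sigma_I$, and define $\Phi\colon D_n\to G$ by $\tau_I\mapsto(1+t_I,\sigma_I)$. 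A short computation shows $\Phi$ respects the relations of $D_n$: $\Phi(\tau_I)^2=\big((1+t_I)\sigma_I(1+t_I),1\big)=(1,1)$, and $\Phi(\tau_I)$ and $\Phi(\tau_J)$ commute whenever $\tau_I$ and $\tau_J$ do in $D_n$, since then $\sigma_I\sigma_J=\sigma_J\sigma_I$ and $(1+t_I)(1+t_J)=(1+t_J)(1+t_I)$.

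Two observations finish the core of the argument. First, composing $\Phi$ with the projection $G\to(\Z/2\Z)^V$ gives exactly $\prod_{I\in V}\delta_I$, so $\Phi$ carries $D_n^{\circ}=\bigcap_I\ker\delta_I$ into $1+\mathfrak{m}$. Second, $\Phi$ is injective, by the argument of the Lemma: on a lean word $w=\tau_{I_1}\cdots\tau_{I_k}$ the first coordinate of $\Phi(w)$ is a product $\prod_j(1+\varepsilon_j t_{I_j})$ with signs $\varepsilon_j\in\{\pm1\}$, whose degree-$k$ term is $\pm t_{I_1}\cdots t_{I_k}$, a non-zero lean monomial; hence $\Phi(w)\neq 1$ whenever $w$ is non-trivial. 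Therefore $\Gamma_{n+1}^{\circ}\subseteq D_n^{\circ}$ embeds into $1+\mathfrak{m}$.

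It remains to note that $1+\mathfrak{m}$ is residually torsion-free nilpotent. The subgroups $(1+\mathfrak{m}^k)\cap(A_n^{\mathbb{Q}})^{*}$ form a central filtration with trivial intersection, and each successive quotient injects (via $1+a\mapsto a \bmod \mathfrak{m}^{k+1}$) into the degree-$k$ component of $A_n^{\mathbb{Q}}$, which, being a $\mathbb{Q}$-vector space, is torsion-free; thus every quotient by a term of the filtration is torsion-free nilpotent. Pulling this filtration back along $\Gamma_{n+1}^{\circ}\hookrightarrow 1+\mathfrak{m}$ produces the filtration asked for in the statement. The one step that is not pure bookkeeping is the construction of $\Phi$ — the realization that the relations $\tau_I^2=1$ can be absorbed into a semidirect factor $(\Z/2\Z)^V$ acting by the sign automorphisms $\sigma_I$, so that passing to the even subgroup lands one in the torsion-free pro-unipotent group $1+\mathfrak{m}$; that $\Phi$ is a well-defined injection and that $1+\mathfrak{m}$ is residually torsion-free nilpotent are then routine.
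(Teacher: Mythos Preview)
Your argument is correct and follows essentially the same route as the paper: embed $D_n^{\circ}$ into the units of a characteristic-zero diagram algebra by sending odd-numbered occurrences of $\tau_I$ to $1+t_I$ and even-numbered ones to $(1+t_I)^{-1}$, verify injectivity via the lean-monomial leading term, and read off residual torsion-free nilpotence from the augmentation filtration. The differences are cosmetic---the paper works over $\Z$ in power series without the relation $t_I^2=0$ and writes the alternating map on $D_n^{\circ}$ directly, whereas you keep $t_I^2=0$ over $\mathbb{Q}$ and package the same map as the restriction of a homomorphism $D_n\to (A_n^{\mathbb{Q}})^{*}\rtimes(\Z/2\Z)^{V}$; this semidirect-product device makes well-definedness transparent, but the underlying idea is identical.
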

\begin{proof}
It is sufficient to show that the even diagram subgroup $D_n^{\circ}$ is residually torsion-free nilpotent. Let $A_n'$ be the algebra of formal power series with integer coefficients in the non-commuting variables $t_I$, where $I$, as before, varies over the non-empty subsets of  $\{1,\ldots, n\}$, and the $t_I$ satisfy the relation
$$t_I t_J = t_J t_I \quad \text{if\ } I\subset J \text{\ or\ } I\cap J=\emptyset.$$
Define a homomorphism of
$D_n^\circ$ to the group $(A_n')^*$ of invertible elements of $A_n'$ by sending each odd-numbered occurrence of $\tau_I$ to $1+t_I$ and each even-numbered occurrence of $\tau_I$ to $(1+t_I)^{-1}$. This map is injective: the image of a lean word in the $\tau_I$ contains the corresponding monomial in the $t_I$ with the coefficient $(-1)^{d/2}$, where $d$ is the number of letters in the word (in other words, the number of chords in the corresponding diagram). Since the filtration of $(A_n')^*$ by the lowest degree of the non-trivial terms has torsion-free quotients and trivial intersection, we see that $D_n^\circ$ also carries such a filtration and, therefore, is residually torsion-free nilpotent.
\end{proof}

\end{document}